\newtheorem{theorem}{Theorem}[section]
\newtheorem{defi}{Definition}[section]
\numberwithin{equation}{section}
\begin{document}

\title[On Cantor-like sets] {On Cantor-like sets and Cantor-Lebesgue singular functions}

\author{Robert DiMartino}
\address{Department of Mathematical and Actuarial Sciences, Roosevelt University  Chicago, Il, 60605, USA.}
\email{[Robert DiMartino]robert.dimartino@gmail.com}
\author{Wilfredo O. Urbina}
\email{[Wilfredo Urbina]wurbinaromero@roosevelt.edu}
\thanks{\emph{2010 Mathematics Subject Classification} Primary 26A03 Secondary 26A30}
\thanks{\emph{Key words and phrases:} Cantor sets, singular functions, perfect nowhere-dense sets.}

\begin{abstract}
In this paper we discuss several variations and generalizations of the Cantor set and study some of their properties. Also for each of those generalizations a Cantor-like function can be constructed from the set. We will discuss briefly the possible construction of those functions.
\end{abstract}
\maketitle

\section{Introduction.}
The Cantor  ternary set is the best example of a {\em perfect nowhere-dense} set in the real line. It was constructed by George Cantor in 1883, see \cite{cantor}, nevertheless it was not the first perfect nowhere-dense set  in the real line to be constructed. The first construction was done by the a British mathematician Henry J. S. Smith in 1875, and Vito Volterra, still a graduate student in Italy, also showed how to construct such a set  in 1881. Due to Cantor's prestige, the Cantor ternary set was  (and still is) the typical example of a perfect nowhere-dense set. Following D. Bresoud \cite{bres} we will refer as the Smith-Volterra-Cantor sets or $SVC$ sets to the family of examples of perfect, nowhere-dense sets exemplified by the work of Smith, Voterra and Cantor. In the present paper we will discuss construction of several perfect nowhere-dense set in the real line.  Most of them  variations in one way or the other of the construction of the  Cantor  ternary set. Thus, it is important to review in detail the construction of the  Cantor  ternary set $C$. \\

\subsection{Cantor Ternary Set.}
$C$ is obtained from the closed interval $[0,1]$ by a sequence of deletions of open intervals known as ''middle thirds".
We begin with the interval $[0,1]$, let us call it $C_0$, and remove the middle third, leaving us with leaving us with the union of two closed intervals of length $1/3$
$$C_1= \left[0,\frac{1}{3}\right]  \cup \left[\frac{2}{3}, 1\right] .$$ Now we remove the middle third from each of these intervals, leaving us with the union of four closed intervals of length $1/9$
\begin{equation*}
C_2=\left[0,\frac{1}{9}\right] \cup \left[\frac{2}{9},\frac{1}{3}\right] \cup\left[\frac{2}{3},\frac{7}{9}\right] \cup\left[\frac{8}{9},1\right] .
\end{equation*}
Then we remove the middle third of each of these intervals leaving us with eight intervals of length $1/27$. We continue this process inductively , then for each $n=1,2, \cdots $ we get a set $C_n$  which is the union of $2^n$ closed intervals of length $1/3^n$. This iterative construction is illustrated in the following figure, for the first four steps:
\begin{center}
\includegraphics[width=3in]{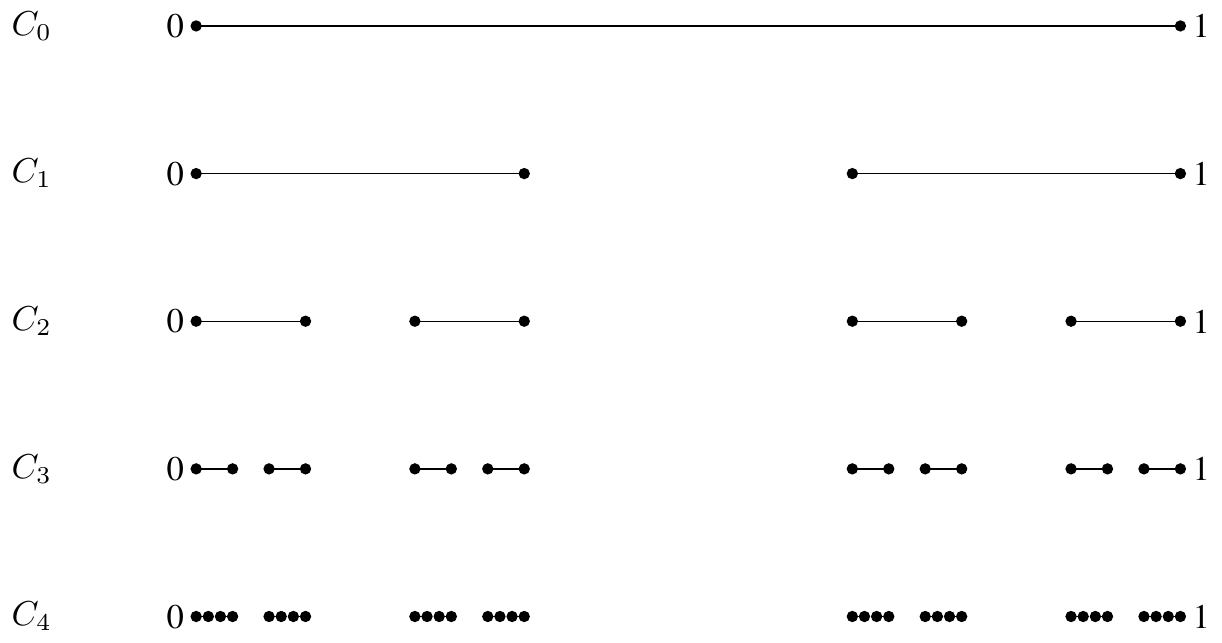}
\end{center}
Finally, we define the {\em Cantor ternary set} $C$ as the intersection
\begin{equation}
C= \bigcap_{n=0}^\infty C_n.
\end{equation}

Clearly $C \neq \emptyset $, since  trivially $0,1 \in C$. Moreover  $C$ is a a closed set, being the countable intersection of close sets, and  trivially bounded, since it is a subset of $[0,1]$. Therefore $C$ is a compact set. Moreover, observe by the construction that if $y$ is the end point of some closed subinterval of a given  $C_n$ then it is also the end point of some of the subintervals of $C_{n+1}$. Because at each stage, endpoints are never removed, it follows that $y \in C_n$ for all $n$. Thus $C$ contains all the end points of all the intervals that make up each of the sets $C_n$ (or alternatively,  the endpoints to the intervals removed)  all of which are rational ternary numbers in $[0,1]$, i.e. numbers of the form $k/3^n$. But $C$ contains much more than that; actually it is an uncountable set since it is a {\em perfect set}. To prove that simply observe that every point of $C$ is approachable arbitrarily closely by the endpoints of the intervals removed (thus for any $x\in C$ and for each $n \in \mathbb{N}$ there is an endpoint, let us call it $y_n \in C_n$, such that $|x-y_n| < 1/3^n$).

$C$ is a {\em nowhere-dense} set, that is, there are no intervals included in $C$. The easiest way to prove that is using an alternative characterization of $C$.
Consider the ternary representation for  $x \in [0,1],$ \footnote{Observe, for the ternary rational  numbers $k/3^n$ there are two possible ternary expansions, since
$$ \frac{k}{3^n} = \frac{k-1}{3^n} + \frac{1}{3^n}  =  \frac{k-1}{3^n} + \sum_{k=n+1}^\infty \frac{2}{3^k}.$$ 
Similarly, for the dyadic rational numbers $k/2^n$ there are two possible dyadic expansions as
$$ \frac{k}{2^n} = \frac{k-1}{2^n} + \frac{1}{2^n}  =  \frac{k-1}{2^n} + \sum_{k=n+1}^\infty \frac{1}{2^k}.$$ 
Thus for the uniqueness of the dyadic and the ternary representations we will take  the infinite expansions representations for the dyadic and ternary rational numbers.}
 
\begin{equation}\label{ternaryexp}
 x = \sum_{k=1}^\infty \frac{\varepsilon_k(x)}{3^k}, \quad \varepsilon_k(x)=0, 1, 2 \quad \mbox{for all} \,k = 1, 2, \cdots 
\end{equation}

Observe that removing the elements where at least one of the \(\varepsilon_{k}\) is equal to one is the same as removing the middle third in the iterative construction, thus 
the Cantor ternary set is the set of numbers in $[0,1]$ that can be written in base 3 without using the digit 1, i.e.
\begin{equation}\label{ternaryChar}
 C=\left\{x\in [0,1] : x = \sum_{k=1}^\infty \frac{\varepsilon_k(x)}{3^k}, \quad \varepsilon_k(x)=0, 2 \quad \mbox{for all} \; k = 1, 2, \cdots
 \right\}.
\end{equation}

Taking two arbitrary points in $C$ we can always find a number between them that requires the digit 1 in its ternary representation, and therefore there are no intervals included in $C$. Surprisingly,
it can be proved that, except for $0,1$, every point in $[0,1]$ can be obtained as a midpoint of two point in $C$, see \cite{rand}. 

On the other hand, observe that for each $n \in {\mathbb N}$ the set 
\begin{equation}\label{ternaryendpoints}
F_n = \left\{t \in [0,1]: t= \sum_{k=1}^n \frac{\varepsilon_k}{3^k},  \quad \mbox{with} \; \varepsilon_k=0, 2 \quad \mbox{for all} \; k = 1, 2, \cdots, n  \right\}
\end{equation}
are precisely the left endpoints of the subintervals of $C_{n}$, and therefore $C_{n}$ can be represented as,
\begin{equation}\label{CharCn}
C_{n} = \bigcup_{t \in F_n} \left[t, t+\frac{1}{3^n}\right].
\end{equation}

Also using this characterization  of  $C$ we can get a direct proof that it is uncountable. Define 
the mapping $f: C \rightarrow [0,1]$ for $  x=\sum_{k=1}^\infty \frac{\varepsilon_k(x)}{3^k} \in C$, as 
\begin{equation}\label{cantorfun}
f(x) =    \sum_{k=1}^\infty \frac{\varepsilon_k(x)/2}{2^k}= \frac{1}{2}    \sum_{k=1}^\infty \frac{\varepsilon_k(x)}{2^k}.
\end{equation}

It is clear that $f$ is one-to-one correspondence from $C$ to $[0,1]$ (observe that as $\varepsilon_k=0, 2$ then $\varepsilon_k/2=0, 1$).

Finally, $C$ has measure zero, since
$$ m(C) = m([0,1]) - m(C^c) = 1 - \sum_{n=1}^\infty \frac{2^{n-1}}{3^n} = 1 -\frac{1}{3}  \sum_{n=0}^\infty (\frac{2}{3})^{n} = 1 -  \frac{1/3}{1- 2/3}=1-1=0.$$

Observe that $C$ can then be obtained by removing a fixed proportion (one third) of each subinterval in each of the iterative steps, by removing the length $1/3^k$ from each subinterval in the $k^\text{th}-$step, or removing  the digit one of the ternary expansion. Of course these three constructions are equivalent. Nevertheless, the first construction, removing a fixed proportion of each subinterval in each of the iterative steps, give us another important property of the Cantor set, its self-similarity across scales, this is illustrated in the following figure:
\begin{center}
\includegraphics[width=3in]{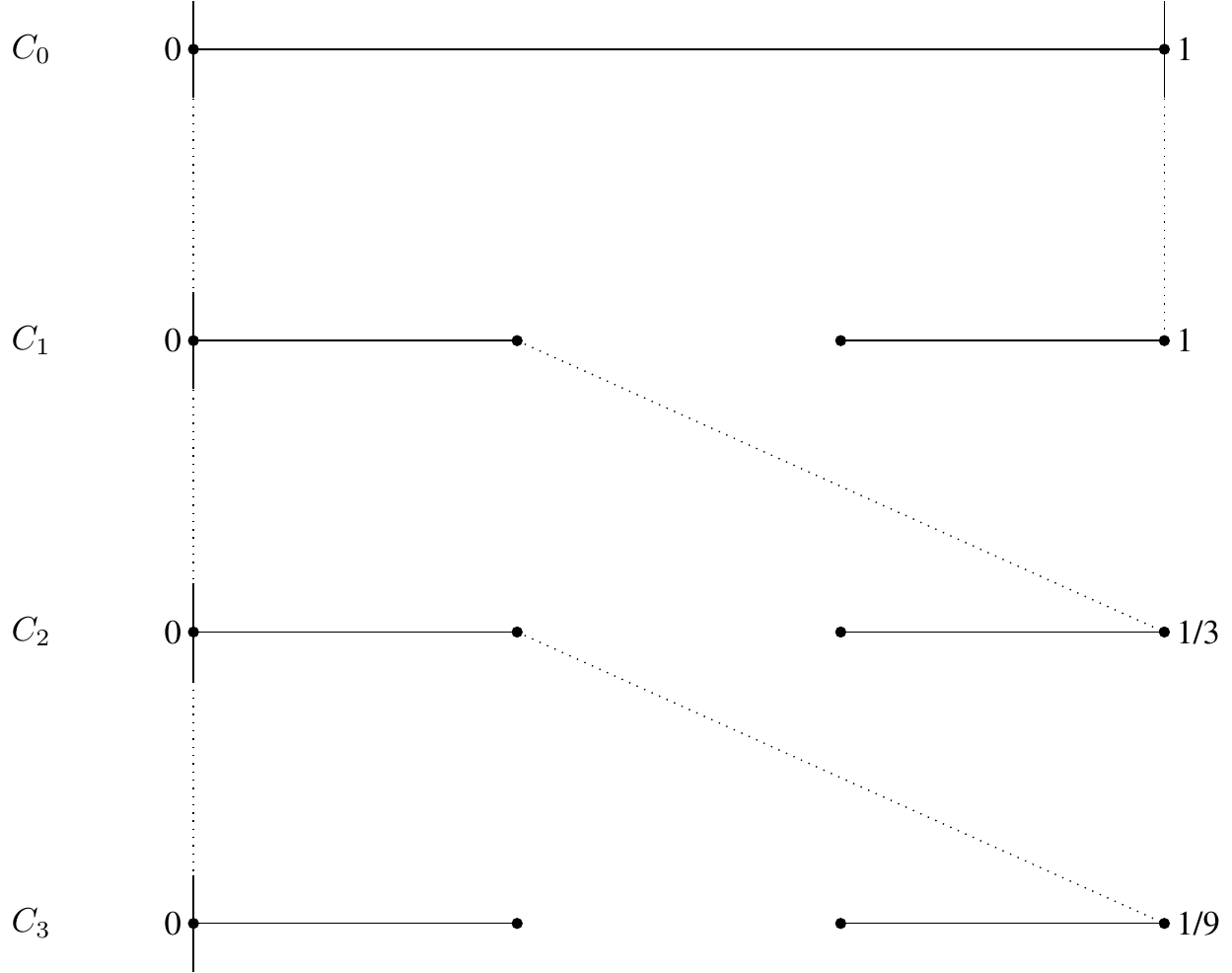}
\end{center}

\subsection{Cantor Function} We can extend the function defined in (\ref{cantorfun}) to  a function defined on $[0,1]$.  For $x \in [0,1]$  written in is ternary expansion (\ref{ternaryexp}) then if 
$$N(x) = \min\{k: \varepsilon_{k}(x)=1\},$$
then 
\begin{equation}
G(x)=\sum_{k=1}^{\infty} \frac{\gamma_{k}(x)}{2^{k}}
\end{equation}
 where
 $$\gamma_{k}(x) = \left\{ \begin{array}{rcl} \frac{\varepsilon_{k}(x)}{2} & \mbox{for} & k<N(x) \\ 0 & \mbox{for} & k=N(x). \\ 1 & \mbox{for} & k>N(x)  \end{array}\right.$$

Observe that this implies that $G$ can be written as
$$ G(x) =\frac{1}{2}\sum_{k=1}^{N(x)-1} \frac{\varepsilon_{k}(x)}{2^{k}}+\sum_{k=N(x)+1}^{\infty} \frac{1}{2^{k}} = \frac{1}{2}\sum_{k=1}^{N(x)-1} \frac{\varepsilon_{k}(x)}{2^{k}}+\frac{1}{2^{N(x)}} $$

This function is called the {\em Cantor function} and it was also defined by G. Cantor in 1883 to be used used as a counterexample to some of Harnack's claim on an extension of the Fundamental Theorem of Calculus. Also H. Lebesgue considered it in his work on integration in 1904, and for that reason it is sometimes refers as {\em  Lebesgue  singular function}. Finally, it is also known with  a more descriptive name as the {\em Devil's staircase}.  

Moreover, if  $t = \sum_{k=1}^{n-1} \frac{\varepsilon_{k}(x)}{3^{k}} \in F_{n-1} $ is the left endpoint of the subinterval of $C_{n-1}$ containing $x$, then
$$G(x)= G(t) +\frac{1}{2^{N(x)}}=G( t+\frac{2}{3^n}).$$

Observe also that if $t = \sum_{k=1}^{n-1} \frac{\varepsilon_{k}}{3^{k}} \in F_{n-1},$
\begin{equation*}
G(t +\frac{1}{3^{n}}) = G(t+ \sum_{k=n+1}^\infty \frac{2}{3^k}) = \sum_{k=1}^{n-1} \frac{\varepsilon_{k}/2}{2^{k}} +  \sum_{k=n+1}^\infty \frac{1}{2^k} =  \sum_{k=1}^{n-1} \frac{\varepsilon_{k}/2}{2^{k}} + \frac{1}{2^{n}} = G(t + \frac{2}{3^{n}}).
\end{equation*}
Thus $G$ is constant on the open subintervals $(t +\frac{1}{3^{n}}, t + \frac{2}{3^{n}})$ which are those that are removed on the $n^\text{th}$-step. Also, from this construction it is easy to see that $f$ is monotone non-decreasing.\\

There is also an iterative construction for the Cantor function, that allows us to get the properties of $G$ easily.
$$\psi_{0}(x)= x, \; \mbox{ for } 0{\leq}x{\leq}1,$$
and for $n \geq 1$
$$\psi_{n}(x) = \left\{ \begin{array}{rcl} \frac{1}{2} \psi_{n-1}(3x) & \mbox{for} & 0{\leq}x{\leq}\frac{1}{3} \\
    							\frac{1}{2} & \mbox{for} & \frac{1}{3}{\leq}x{\leq}\frac{2}{3} \\
								\frac{1}{2}\psi_{n-1}(3x-2)+\frac{1}{2} & \mbox{for} & \frac{2}{3}{\leq}x{\leq}1
		\end{array}\right.$$
Observe that, similar to removing middle thirds in the construction of $C$, the Cantor ternary set, the middle thirds are made constant in the iterative construction of the Cantor function. Also $f_n$ is a monotone non-decreasing continuous function for all $n \geq 1$. It can be proved that, if $m < n$

$$|\psi_n(x) - \psi_m(x) | \leq \frac{1}{2^m}, \; \mbox{for all} \; x \in [0,1]. $$ 
By the Cauchy criterion for uniform convergence, we conclude that $\{\psi_n\}$ converges uniformly. Then $G$ is a monotone non-decreasing continuous function on $[0,1]$. Finally observe that $G$ is constant in all the intervals that are removed in the construction of $C$, $G(0) =0$ and $G(1) =1$ so increases only in the Cantor set, thus $G$ is a {\em singular function } i.e. $G' = 0$ a.e. 

It can be proved that $G$ is the unique uniformly real-valued function on $[0,1]$ such that  
$$G(x) = \left\{ \begin{array}{rcl} \frac{1}{2} G(3x) & \mbox{for} & 0{\leq}x{\leq}\frac{1}{3} \\
								\frac{1}{2} & \mbox{for} & \frac{1}{3}{\leq}x{\leq}\frac{2}{3} \\
								\frac{1}{2}G(3x-2)+\frac{1}{2} & \mbox{for} & \frac{2}{3}{\leq}x{\leq}1
		\end{array}\right.$$
For the proof of this result and several other interesting results about the Cantor function, see an extensive study on it at  \cite{DovMartRyazVour}. Also, the Cantor function can be characterized as the only monotone increasing function on $[0,1]$ such that $G(0)=0, \, G(x/3)=G(x)/2$ and $G(1-x)= 1 G(x),$ see \cite{chalice}.

Illustration of some iteration of the construction of  the Cantor function $G$ is given in the following figure,
\begin{center}
\includegraphics[width=3in]{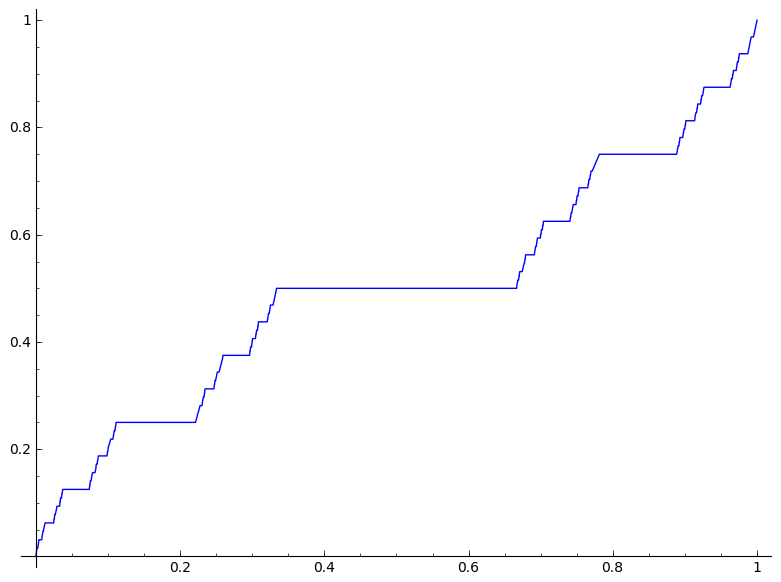}
\end{center}

\section{Variations and generalizations of the Cantor ternary set.}

In this section we are going to consider several variations and generalizations of the Cantor set, of course all of them are bounded, perfect, nowhere-dense sets. But first, it is interesting to observe that all the nowhere-dense perfect sets are constructed in a similar form as the Cantor sets as the following result shows, see \cite{bres}. For completeness the proof of it is included.

\begin{theorem}
If $S$ is a bounded, perfect, nowhere-dense, non-empty set, a =$ \inf S$, b =$\sup S$, then there is a countable collection of open intervals contained in $[a, b]$ such that $S$ is the  derived set (i.e. the set of all accumulation points) of the set of the endpoints of these intervals.
\end{theorem}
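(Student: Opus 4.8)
The plan is to produce the countable collection of open intervals as exactly the bounded connected components of the complement $[a,b] \setminus S$, and then to verify that $S$ is precisely the set of accumulation points of the endpoints of these intervals. First I would observe that $S$ is closed: since $S$ is perfect it contains all its accumulation points, so $S$ is closed, and being bounded it is compact. Hence $[a,b] \setminus S$ is open in $[a,b]$, and being an open subset of the line (after noting $a,b \in S$, which follows from $S$ being perfect — the infimum of $S$ is approached by points of $S$, so it is an accumulation point and therefore lies in $S$, and similarly for the supremum) it decomposes uniquely into an at most countable family $\{(a_k, b_k)\}$ of pairwise disjoint open intervals. This family is nonempty because $S$ is nowhere-dense: $[a,b] \not\subseteq S$, for otherwise $S$ would contain an interval. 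Let $E = \{a_k : k\} \cup \{b_k : k\}$ be the set of all endpoints; note $E \subseteq S$, since each $a_k, b_k$ is a boundary point of the open set $[a,b]\setminus S$ and $S$ is closed (and $a_k \ne a$, $b_k \ne b$ would have to be checked, or simply handled by noting endpoints in $[a,b]$ that equal $a$ or $b$ are already in $S$).

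Next I would prove the two inclusions for the derived set $E'$. For $E' \subseteq S$: since $E \subseteq S$ and $S$ is closed, every accumulation point of $E$ lies in $\overline{E} \subseteq S$. For the reverse inclusion $S \subseteq E'$, take $x \in S$ and $\varepsilon > 0$; I must find a point of $E$ within $\varepsilon$ of $x$, distinct from $x$. Consider the interval $(x-\varepsilon, x+\varepsilon)$. Because $S$ is nowhere-dense, this interval is not contained in $S$, so it contains some point $y \notin S$, hence $y$ lies in one of the removed intervals $(a_k, b_k)$. Now either that whole removed interval is small and sits inside $(x-\varepsilon,x+\varepsilon)$, in which case its endpoints $a_k, b_k \in E$ are within $\varepsilon$ of $x$; or the removed interval is large, but then one of its endpoints separates $y$ from $x$ and must itself lie in $(x-\varepsilon, x+\varepsilon)$, because $x \in S$ means $x \notin (a_k,b_k)$, so $x \le a_k$ or $x \ge b_k$, and whichever endpoint lies between $x$ and $y$ is then trapped inside the $\varepsilon$-interval. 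In all cases we obtain an endpoint in $E \cap (x-\varepsilon, x+\varepsilon)$, and it is different from $x$ provided we also use that $x$ is itself an accumulation point of $S$ (perfectness) so we may choose $y$, and hence the endpoint, genuinely close to but distinct from $x$; a short extra argument pushes the endpoint off $x$ when necessary.

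The main obstacle I expect is the bookkeeping in this last step — ensuring the endpoint we extract is \emph{distinct} from $x$ and genuinely accumulates (rather than equalling $x$ for all small $\varepsilon$). The clean way around it is to use perfectness of $S$ directly: given $x \in S$ and $\varepsilon>0$, pick a point $x' \in S$ with $0 < |x - x'| < \varepsilon/2$; then $x$ and $x'$ are two distinct points of $S$, and I claim there is a removed interval between them or arbitrarily near, whose endpoint is strictly between $x$ and $x'$ and hence within $\varepsilon$ of $x$ and unequal to $x$. This forces infinitely many distinct endpoints into every neighborhood of $x$, giving $x \in E'$. Combining the two inclusions yields $E' = S$, and since the family $\{(a_k,b_k)\}$ is contained in $[a,b]$ and countable, the theorem follows. $\Box$
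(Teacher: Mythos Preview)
Your proof is correct and follows essentially the same route as the paper's: take the open intervals to be the connected components of $[a,b]\setminus S$, note the endpoints lie in $S$ so $E'\subseteq S$, and use nowhere-density to place a complementary interval (hence an endpoint) inside every neighborhood of any $s\in S$ for the reverse inclusion. In fact you are more careful than the paper on one point---you explicitly worry about the extracted endpoint possibly coinciding with $x$ and invoke perfectness to separate it, whereas the paper simply asserts that an endpoint lies in the neighborhood and concludes.
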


\begin{proof}
Since $S$ is perfect, it is closed thus $a, b \in S$.  Then $S^c \cap [a,b]$, its complement in $[a,b]$, is relative open, and therefore it is a countable union of open intervals. Since each point of $S$ is an accumulation point of $S$, $a$ can not be a left endpoint of one of these open intervals nor can $b$ be a right endpoint of one of these open intervals. If $S^c \cap [a,b]$ consisted of only finitely many intervals, then $S$ would contain a closed interval of positive length. Since $S$ is nowhere-dense, the number of intervals in $S^c \cap [a,b]$ must be countably infinite. 

Now we will show that  $S$ is the set of all accumulation points of the set of the endpoints of the disjoint open intervals whose union is $S^c \cap [a,b]$. Let $\{I_1, I_2, \cdots \}$ be an enumeration of these disjoint open intervals. For the intervals $I_j$, let $L(I_j)$ and $R(I_j)$ be the left and right endpoints, respectively. The set of endpoints is contained in $S$, and, since $S$ is perfect, its derived set it is also in $S$. Since $S$ is a nowhere-dense set, we know that is closure $\overline{S}=S$ does not contain any open interval. Thus if $s \in S$ every neighborhood of $s$ has a nonempty intersection with at least one of the intervals $I_j$ and therefore an endpoint of this interval is in the neighborhood of $s$. It follows that $s$ is an accumulation point for the set of endpoints, and, therefore, $S$ is the derived set of endpoints.
\end{proof}

Let us start discussing generalizations of the Cantor set.  Observe that due to the previous theorem since all these constructions are obtained by removing a countable number of open intervals from the interval $[0,1]$ they are bounded, perfect, nowhere-dense, non-empty sets.\\

In  \cite{bres}, D. Bresoud considers the following family of examples of perfect, nowhere-dense sets.

\begin{defi} The set $S$ is said to be a {\em Smith-Volterra-Cantor} set or simply an $SVC$ set, if it is  constructed by starting with a closed interval  and removing an open subinterval. One then removes an open interval from each of the remaining subintervals and continue through an infinite sequences of such removals, choosing the subintervals that are removed such that every open subinterval of the original set overlaps with at least one of the subintervals that are removed. The $SVC$ set  $S$ is the intersection of the countably infinite collection of sets that remain after each iteration.
\end{defi}

Observe that, by construction, $S$ is a perfect and nowhere-dense set. \\

A particular family of $SVC$ sets is defined, for each $n\geq 3$, as the $SCV(n)$ set. They are constructed by removing, in the $k^\text{th}$ step of the iteration, an open interval of length $1/n^k$ from the center of each of the remaining closed intervals. Observe that $SVC(3)$ is precisely the Cantor ternary set.

 Another important case is $SVC(4)$, which was considered by Volterra as he constructed his famous counter-example of a function with bounded derivative that exists everywhere but the derivative is not Riemann integrable in any closed and bounded interval, see \cite{bres} Chapter 4. 
 
 The $SCV(n)$ sets are also perfect sets and the measure of each is
$$ m([0,1])-m(SVC(n)^{c}) = 1- \frac{1}{n} - \frac{2}{n^2} - \frac{4}{n^3} - \cdots = 1- \frac{1}{n} \frac{1}{1-2/n} = \frac{n-3}{n-2}.$$

Thus the $SCV(n)$ sets have positive measure, and moreover they can have measure very close to one,  but nevertheless they are nowhere-dense. For this reason they are called {\em fat Cantor sets}.\\

\subsection{ $\lambda-$fat Cantor sets.}

Another type of fat Cantor set can be found in the literature, see \cite{GelOlm}, \cite{darst} and  \cite{BoeDarErd}. Take $ 0< \lambda \leq 1$ and repeat the construction of $C$ with the following modification. At the $k^\text{th}$ step, instead of taking out an open interval of length $1/3^k$ from the center of each of the $2^{k-1}$ intervals of equal length, take out an open interval of length $\lambda/3^k$ to obtain $C_{\lambda,k}$ ($\lambda/2^{2k-1}$ in \cite{GelOlm}). We then obtain $C_{\lambda}$ as usual as
\begin{equation}
C_{\lambda}= \bigcap_{n=0}^\infty C_{\lambda,n}.
\end{equation}
Observe that the sum of the length  open intervals taken out in the first $k$ steps is
$$ \frac{\lambda}{3} + 2\frac{\lambda}{3^2} +\cdots + 2^{k-1}\frac{\lambda}{3^k} = \lambda [1- (\frac{2}{3})^k],$$
thus the measure of $C_\lambda$ is $1- \lambda.$\\

A variation of the previous construction is seen in the following family of sets.\\

\subsection{Middle$-\beta$ Cantor sets.}

Let $0<\beta <1$, then begin with the unit interval $[0,1]$.  In the first iteration, remove an open subinterval of length $\beta$ of $[0,1]$ from the middle of the interval so that,
 $$C^{\beta}_{1}=\left[ 0,\frac{1}{2}(1-\beta) \right] \cup \left[ \frac{1}{2}(1+\beta),1 \right]$$
In the second iteration, remove a $\beta$ proportional length  of each of the two disjoint intervals from the middle of each of the intervals in $C^{\beta}_{1}$. Then
\begin{eqnarray*}
C^{\beta}_{2}&=&\left[ 0,\frac{1}{4}(1-\beta)^{2} \right] \cup \left[ \frac{1}{4}(1-\beta)(1+\beta),\frac{1}{2}(1-\beta) \right]\\
&& \quad \quad \cup\left[ \frac{1}{2}(1+\beta),\frac{1}{2}\left( (1+\beta)+\frac{1}{2}(1-\beta)^{2} \right) \right] \cup \left[ \frac{1}{2}(1+\beta)\left( 1+\frac{1}{2}(1-\beta) \right), 1 \right].
\end{eqnarray*}

Continue iterating by removing an open subinterval of length $\beta-$proportional of each of the disjoint intervals from the middle of each of the intervals.  Thus we are removing $\beta-$proportional subintervals each time. Then, the $\beta-$Cantor set,
$$C^{\beta}=\bigcap_{k=1}^{\infty}C^{ \beta}_{k}.$$
The next figure illustrate the first three steps of this construction,
\begin{center}
\includegraphics[width=3in]{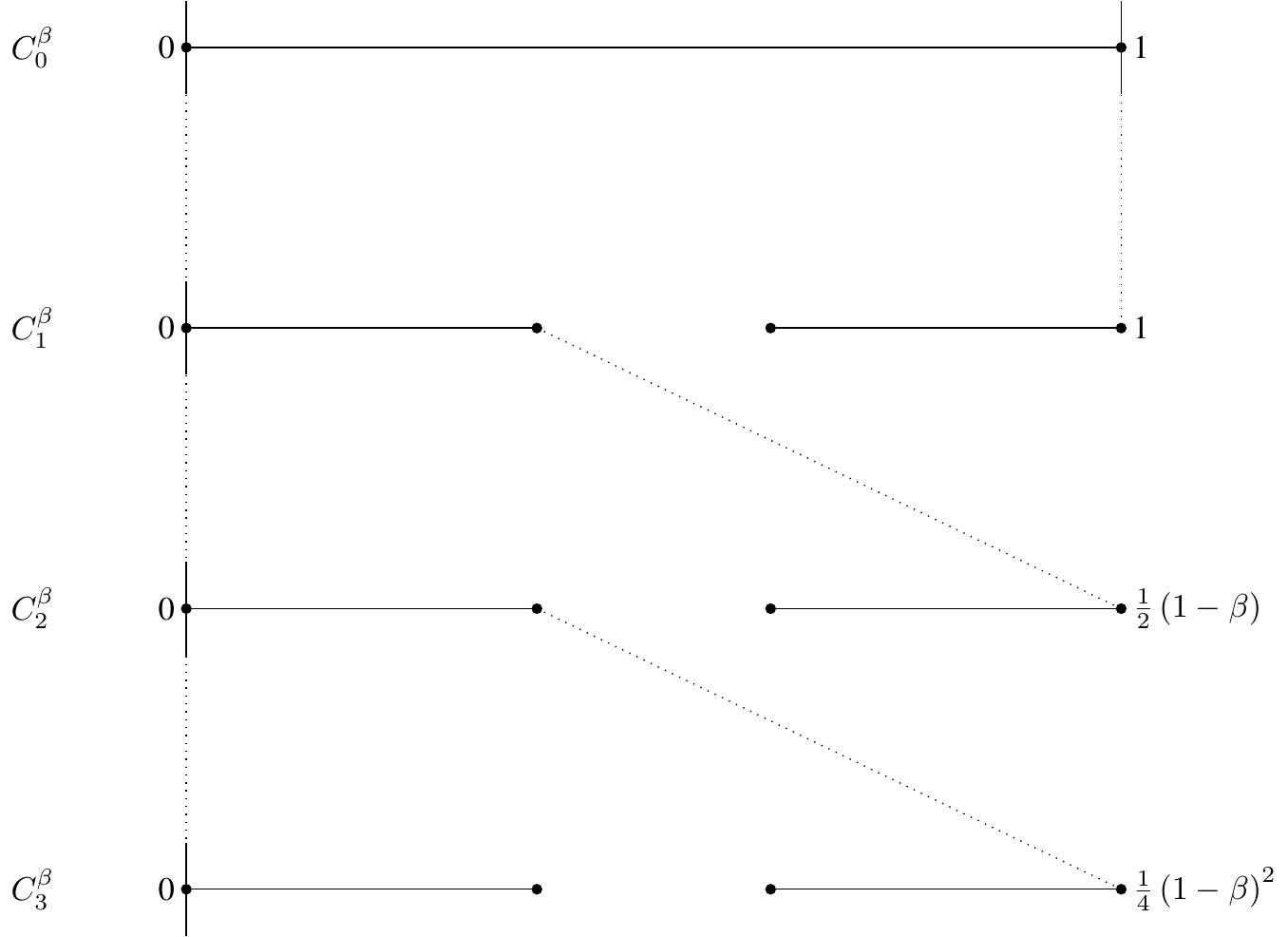}
\end{center}
Observe from the figure that the ratio between the closed intervals and the removed open subintervals remains constant, i. e. we have in this case self-similarity.

The $\beta$-Cantor  sets have measure zero. We can prove this using the continuity from above of the Lebesgue measure, as $m({C^{\beta}_{k}}) = (1-\beta)^{k} $ and therefore
$$m\left({C^{\beta}} \right) = \lim_{k \to \infty} m\left( {C^{\beta}_{k}} \right) = \lim_{k \to \infty} (1-\beta)^{k}  = 0,$$
or, as usual, computing the measure of the set from the measure of its complement in $[0,1]$,
$$ m\left( {C^{\beta}} \right)= 1- \beta - 2 (\frac{1}{2} (1-\beta) \beta) - 4(\frac{1}{4} (1-\beta)^2\beta) - \cdots = 1 - \beta \frac{1}{1-(1-\beta)} =1-1 =0.$$ 

\subsection{Middle$-\{\beta_{i}\}$ Cantor sets.}

The $C^{\beta}$ sets can be generalized by allowing the proportion removed from the intervals to change at each iteration in the construction.  In this construction, a sequence $\{\beta_{i}\}$ is chosen with each $0<\beta_{i}<1$.  Then starting again with $$C_{0}^{\{\beta_{i}\}} = [0,1],$$ in the first iteration we remove $\beta_{1}$ from the middle of the interval. Then $$C_{1}^{\{\beta_{i}\}} = \left[ 0,\frac{1}{2}(1-\beta_{1}) \right] \cup \left[ \frac{1}{2}(1+\beta_{1}),1 \right].$$
In the next iteration, $\beta_{2}$ is removed from the middle of the two disjoint intervals. Then,
\begin{eqnarray*}
C^{\{\beta_{i}\}}_{2}&=&\left[ 0,\frac{1}{4}(1-\beta_{1})(1-\beta_{2}) \right] \cup \left[ \frac{1}{4}(1-\beta_{1})(1+\beta_{2}),\frac{1}{2}(1-\beta_{1}) \right]\\
&& \quad \quad \cup\left[ \frac{1}{2}(1+\beta_{1}),\frac{1}{2}\left( (1+\beta_{1})+\frac{1}{2}(1-\beta_{1})(1-\beta_{2} \right) \right]\\
&& \quad \quad \cup \left[ \frac{1}{2}(1+\beta_{1})\left( 1+\frac{1}{2}(1-\beta_{2}) \right), 1 \right].
\end{eqnarray*}
Iterating the process as before, we get
$$C^{\{\beta_{i}\}}=\bigcap_{j=1}^{\infty} C^{\{\beta_{i}\}}_{j}.$$

In order to determine the measure of $C^{\{\beta_{i}\}}$, the measure of each iteration in the construction should be noted.

\begin{eqnarray*}
m(C^{\{\beta_{i}\}}_{0})&=&1\\
m(C^{\{\beta_{i}\}}_{1})&=&(1-\beta_{1})\\
m(C^{\{\beta_{i}\}}_{2})&=&(1-\beta_{1})(1-\beta_{2})\\
&\vdots&\\
m(C^{\{\beta_{i}\}}_{n})&=&(1-\beta_{1})(1-\beta_{2})(1-\beta_{3})\cdots(1-\beta_{n}),\\
\end{eqnarray*}
therefore
\begin{equation}
m(C^{\{\beta_{i}\}})=\prod_{j=1}^{\infty}(1-\beta_{j})
\end{equation}

Depending on how the $\{\beta_{i}\}$ are chosen, $C^{\{\beta_{i}\}}$ can either be a fat Cantor set or have measure zero.\\

Now we introduce a class of Cantor-like sets that includes all the $SVC(n)$ sets, all the $C_\lambda$ sets and  all the $C^{\beta}$ sets.  We will call them {\em generalized} $SVC$ sets.\\

\subsection{ {\em Generalized} $SVC(\alpha,\beta)$ sets.}

Consider $0< \alpha, <1$ and  $0 < \beta<1$ such that $ 0 < \beta \leq 1- 2\alpha$. We define a {\em generalized} $SVC(\alpha,\beta)$  set, $C^{(\alpha, \beta)}$, as follows.
Begin with the unit interval $[0,1]$.  In the first iteration, we remove an open interval of length $\beta$ centered at $1/2$ from $[0,1]$, obtaining
 $$C^{(\alpha, \beta)}_{1}=\left[ 0,\frac{1}{2}(1-\beta) \right] \cup \left[ \frac{1}{2}(1+\beta),1 \right]$$
In the second iteration, we remove an open interval of length $\alpha\beta$ of each of the two disjoint intervals from the middle of each of the intervals in $C^{(\alpha, \beta)}_{1}$, 
\begin{eqnarray*}
C^{(\alpha, \beta)}_{2}&=&\left[ 0,\frac{1}{4}(1-\beta)- \frac{1}{2} \alpha \beta \right] \cup \left[ \frac{1}{4}(1-\beta)+\frac{1}{2}\alpha \beta ,\frac{1}{2}(1-\beta) \right]\\
&& \quad \cup\left[ \frac{1}{2}(1+\beta),\frac{1}{2}\left( (1+\beta)+\frac{1}{2}(1-\beta) -\alpha \beta\right) \right] \cup \left[\frac{1}{2}\left( (1+\beta)+\frac{1}{2}(1-\beta) +\alpha \beta\right), 1 \right]
\end{eqnarray*} 

We iterate this procedure by removing in the $k^\text{th}$- step an open subinterval of length $\alpha^{k-1}\beta$ from the middle of each of the disjoint intervals.  Thus we are removing $2^{k-1}$ subintervals of length $\alpha^{k-1}\beta$ each time. Then, the {\em generalize} $SVC(\alpha,\beta)$ set, $C^{(\alpha, \beta)}$, is obtained as
$$C^{(\alpha, \beta)}=\bigcap_{k=1}^{\infty}C^{(\alpha, \beta)}_{k}.$$
The next figure illustrates the first three steps of this construction. Observe that in this case there is not proportionality in the construction,
\begin{center}
\includegraphics[width=3in]{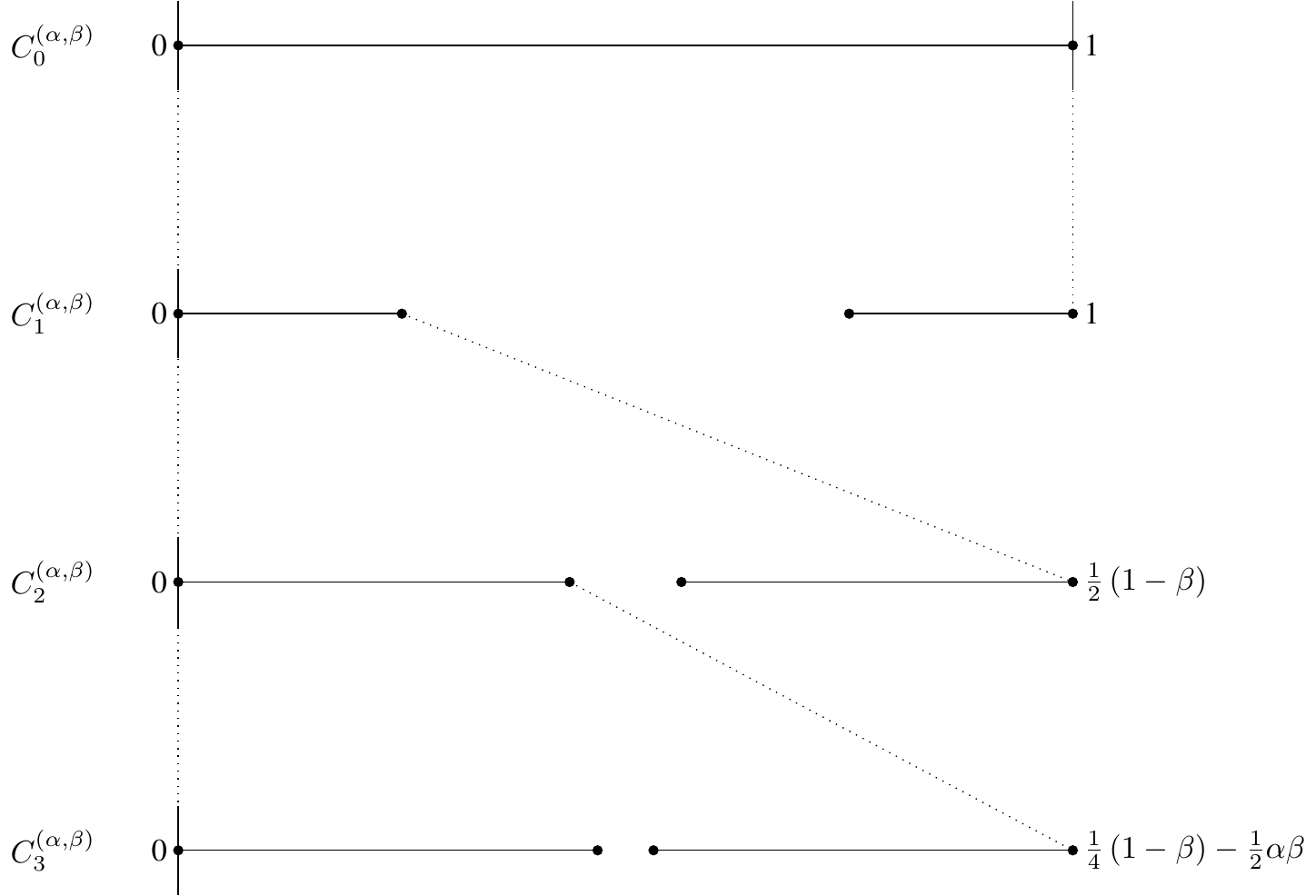}
\end{center}

The measure of the $C^{(\alpha, \beta)}$-Cantor  sets is then the following
$$ m\left( C^{(\alpha, \beta)} \right)= 1- \beta - 2 \alpha \beta - 4 \alpha^2\beta - \cdots = 1 - \frac{ \beta}{1- 2\alpha}.$$ 
From here additional conditions on $\alpha, \beta$ arise $1- 2 \alpha >0$ thus $0 < \alpha < 1/2$ and 
$ 0 < \beta \leq 1- 2\alpha.$ Therefore $C^{(\alpha, \beta)}$-Cantor  sets are fat sets unless $ \beta =1- 2\alpha.$\\

Observe that the Cantor  ternary set $C$ is obtained from this case taking $\alpha= \beta= 1/3$; the $C_\lambda$ Cantor sets are obtained taking $\alpha=1/3, \beta = \lambda/3$; the $C^\beta$ Cantor sets are obtained simply taking $\alpha = (1-\beta)/2, \beta=\beta$ and the $SVC(n)$ sets taking $\alpha= \beta = 1/n$.\\

\subsection{ {\em Cantor-like} sets determined by a sequence $\{\gamma_j\}$.}

In \cite{GelOlm} there is a very general construction. Let $0<\gamma<1$ and a sequence of positive numbers $\{\gamma_j\}$ such that $\sum_{j=0}^\infty 2^j \gamma_j = \gamma.$ Delete from $[0,1]$ an open interval $I_0$ centered at $1/2$ and of length $\gamma_0$. Then from $[0,1] -I_0$ delete two open intervals $I_1^1, I^2_1,$ each one centered in one of the two disjoint, closed intervals whose union is 
$[0,1] -I_0$ and each of length $\gamma_1$, continuing deletions as in preceding steps. At the $j^\text{th}-$step of deletion, $2^j$ open subintervals, $I^1_j, I^2_j, \cdots I_j^{2^j}$, properly centered in the closed intervals constituting the residue at the $(j-1)^\text{th}$-step, and each of length $\gamma_j$ are deleted.

Observe that the Cantor ternary set $C$ is obtained from this construction by taking $\gamma_j = 1/3^{j+1}$. The $C_\lambda$- Cantor sets are obtained by taking $\gamma_j = \lambda / 3^{j+1}$, the $SVC(n)$ sets by taking $\gamma_j = 1/n^{j+1},$ and the generalized $SVC$ sets  by taking $\gamma_j = (2 \alpha)^j \beta.$\\

An analogous construction can also be  found in \cite{kardos}.\\

There is a generalization of this construction due to Besicovitch and Taylor, see \cite{BesicoTaylor}, given in a non-constructive way as follows. $E$ is a closed set contained in $[0,1]$. Let $I =(0,1)$, then $I-E$ is an open set and therefore it is a countable union of open intervals. If the length of these intervals may be arranged as a non-increasing sequence $\psi= \{a_n\}$ of positive numbers, such that 
\begin{equation}\label{measure0cond}
\sum_{n=1}^\infty a_n =1,
\end{equation}
then $E$ has zero Lebesgue measure. Observe that given any non-increasing sequence $\psi= \{a_n\}$ satisfying (\ref{measure0cond}), there are many possible sets $E$ such that the complementary intervals $I-E$ have lengths given by the elements of $\psi$, but not all of them are necessarily perfect sets.The Cantor set depends of the order of the sequence and the Hausdorff dimension of the resulting set also depend on the order.  \\

\subsection{ $k$-adic-type Cantor-like sets.}

The alternative characterization of the Cantor ternary set (\ref{ternaryChar}) allows us to extend another generalization of it. Let $k \in \mathbb{N}$ with $k>2$, $p \in \mathbb{N}, \; 1<p<k$, and start again with the unit interval $[0,1]$.  Partition the interval into $k$  intervals of equal length and remove $p$ open intervals of the partition making sure to leave the first and last intervals\footnote{ if not, the set obtained would not be perfect since the construction would produce isolated points.} obtaining $C^k_{1}$. In the second iteration, partition each of the remain $k-p$ intervals in $C^k_{1}$ into $k$ partitions of equal length and remove the same $p$ open intervals of the partition corresponding to the intervals removed in the first iteration, obtaining $C^k_{2}$.  Continue partitioning each subinterval in $k$ subinterval and removing $p$ of them in the same pattern.  Then
$$C^k=\bigcap_{i=i}^{\infty} C^k_{i}.$$
Observe that in this construction in each step $p \geq 1$ subintervals are removed from each closed subinterval in  $C^k_{i}$.\\
   
Alternatively, $k$-adic Cantor  set $C^k$,  can be defined directly from the $k$-adic expansion of the real numbers in $[0,1]$.

\begin{eqnarray*}
&& C^k = \{ x \in [0,1] :  x=\sum_{i=1}^{\infty}\frac{\epsilon_{i}}{k^{i}}\,  \text{where }\epsilon_{i} \in \{\eta_0, \eta_1, \cdots,\eta_p\} \subseteq \{0,1,\cdots,k-1\}, \\
&& \hspace{8cm} \text{ and }\, \eta_1=0, \eta_p=k-1\} 
\end{eqnarray*}

The $k$-adic-type sets have measure zero, since for $n\geq 1,$
$m\left( C^k_{i} \right) =\left(\frac{k-p}{k}\right)^{ni}$ and therefore,

$$m\left( {C^k} \right)  = \lim_{i \to \infty} m\left( {C^k_{i}} \right) = 0.$$

\subsection{ {\em Rescaling Cantor sets.} }

A generalization of the $k$-adic-type Cantor-like sets is give in the following construction, see \cite{Tsang}.  Let $\mathcal{C}_0$ be the closed interval $[0,1]$ and $p \in {\mathbb N}$. Form the  set $\mathcal{C}_1$ by deleting $p-1$ open intervals from $\mathcal{C}_0$ so that the $p$ closed intervals, each of length $\varepsilon_i>0 \; (i =1, \cdots, p)$ of the mother interval remain. Form $\mathcal{C}_2$ by repeating this process with each of the $k$ intervals in $\mathcal{C}_1$.  Continue this process inductively. The set 
$$\mathcal{C} = \bigcap_n \mathcal{C}_n$$
 is called {\em rescaling Cantor set}, since each ``daughter" resembles its mother. The $k$-adic type Cantor set is obtained when $\varepsilon_i= \frac{1}{k}$, for all $i=1,\cdots,p$. If the '$\varepsilon_i$'s are different the set $\mathcal{C} $ is called a {\em non-uniform} Cantor set.
 
\vspace{5mm}

\subsection{ {\em Cantor-like} sets determined by two sequences $\{k_n\}$, $\{c_n\}$. }

Finally, there is a very general procedure to construct Cantor-like sets, see \cite{GanVijaVen}. 
Let $\{k_n\}$ be a sequence of positive integers such that $k_n \geq 2,$ for every $n \in {\mathbb N}$. Let $\{c_n\}$ be another sequence  of positive integers such that $0<k_n c_n < c_{n-1},$ for every $n \geq 1.$ For each $n \in {\mathbb N}$ define 
$$r_n = \frac{c_{n-1}-c_n}{k_n-1}.$$
Let $F_0 =\{0\},$ and for $n \geq 1$
$$ F_n = \left\{ t= \sum_{j=1}^n \varepsilon_j r_j:  \quad \varepsilon_j=0, 1,2, \cdots, k_n-1, \quad \mbox{for all} \; j = 1, 2, \cdots, n  \right\}.$$
Define 
$$ E^{k,c}_n = \bigcup_{t \in F_n} [t, t+ c_n].$$
Then the set 
$$ E^{k,c} = \bigcup_{n=1}^\infty E^{k,c}_n,$$
is called the {\em Cantor-like} set determined by the sequences $\{k_n\}$ and $\{c_n\}.$ When $k_n =2$ for all  $n \in {\mathbb N}$ these set are called {\em symmetric Cantor} sets determined by the sequence $\{c_n\}$, see \cite{LeePark}. \\

As we already said this construction is very general, for instance:

\begin{itemize}
\item The Cantor ternary set corresponds to the case, $k_n =2,$ $c_0=1$ and $c_n = 1/3^n$, for all  $n \geq 1.$ In this case $r_0=1$ and $r_n =  2/3^n$ for all  $n \geq 1.$ \\

\item The $\lambda$ - fat Cantor sets correspond to the case $k_n =2,$ $c_0=1$ and 
$$c_n = \frac{1}{2^n} (1- (1-(\frac{2}{3})^n) \lambda),$$
 for all  $n \geq 1.$ In this case $r_0=1$ and 
 $$r_n =  \frac{1}{2^n} (1- \lambda +2(\frac{2}{3})^n \lambda),$$ 
 for all  $n \geq 1.$

\item The middle$-\beta$ Cantor sets correspond to the case $k_n =2,$ $c_0=1$ and 
$$c_n = \frac{1}{2^n} (1- \beta)^n,$$ 
for all  $n \geq 1.$ In this case $r_0=1$ and 
$$r_n =  \frac{1}{2^n} (1- \beta)^{n-1} (1+\beta),$$ 
for all  $n \geq 1.$

\item The generalized middle$-\{\beta_{i}\}$ Cantor sets correspond to the case $k_n =2,$ $c_0=1$ and 
$$c_n = \frac{1}{2^n} \prod_{j=1}^n(1- \beta_j),$$ 
for all  $n \geq 1.$ In this case $r_0=1$ and 
$$r_n =  \frac{1}{2^n} \prod_{j=1}^{n-1}(1- \beta_j) (1+\beta_n),$$ 
for all  $n \geq 1.$

\item The generalized $SVC(\alpha, \beta)$ sets correspond to the case $k_n =2,$ $c_0=1$ and 
$$c_n = \frac{1}{2^n} (1-\beta) -\frac{\alpha \beta}{2^{n-1}}\left[\frac{1-(2\alpha)^{n-1}}{1-2\alpha}\right] ,$$ for all  $n \geq 1.$ In this case $r_0=1$ and 
$$r_n =  \frac{1}{2^{n}} (1- \beta)- \frac{\alpha \beta}{1-2\alpha}\left[\frac{1}{2^{n-1}}-(1-\alpha)\alpha^{n-2}\right], $$ for all  $n \geq 1.$\\

\end{itemize}

Nevertheless, the $k$-adic-type Cantor-like sets are not necessary included in this construction. There are Cantor sets even more general than these called {\em homogeneous perfect} sets, see \cite{WenWu}.\\

Finally, one of the most important results in \cite{GanVijaVen} is the computation of Hausdorff dimension of Cantor-like sets determined by two sequences, see Collorary 4, page 202 in this paper, see also \cite{kardos}. Using this result we can compute the Hausdorff dimension of several of the Cantor-like sets considered above:

\begin{itemize}
\item For the ternary Cantor set, since $k_n =2,$ $c_0=1$ and $c_n = 1/3^n$, for all  $n \geq 1,$ then as
$$a= \lim_{n\rightarrow \infty} \frac{c_n}{c_{n-1}} =   \lim_{n\rightarrow \infty} \frac{3^{n-1}}{3^{n}} = \frac{1}{3},$$
and therefore,
$$ \mbox{dim}_{\mathcal H} (C) = \frac{\log k}{-\log a} = \frac{\log 2}{-\log 1/3} =\frac{\log 2}{\log 3}.$$ 

\item For the $\lambda-$fat Cantor sets, since $k_n =2,$ $c_0=1$ and 
$$c_n = \frac{1}{2^n} (1- (1-\frac{2}{3})^n \lambda),$$
 for all  $n \geq 1,$ then as
$$a= \lim_{n\rightarrow \infty} \frac{ \frac{1}{2^n} (1- (1-(\frac{2}{3})^n) \lambda)}{ \frac{1}{2^{n-1}} (1- (1-(\frac{2}{3})^{n-1}) \lambda)} =   \lim_{n\rightarrow \infty}  \frac{ (1- (1-(\frac{2}{3})^n) \lambda)}{ 2 (1- (1-(\frac{2}{3})^{n-1}) \lambda)} = \frac{1}{2},$$
and therefore, we get the well known value,
$$ \mbox{dim}_{\mathcal H} (C_\lambda) = \frac{\log k}{-\log a} = \frac{\log 2}{-\log 1/2} =\frac{\log 2}{\log 2}=1.$$ 

\item For the middle$-\beta$ Cantor sets, since $k_n =2,$ $c_0=1$ and 
$$c_n = \frac{1}{2^n} (1- \beta)^n,$$ 
for all  $n \geq 1,$ then as
$$a= \lim_{n\rightarrow \infty} \frac{ \frac{1}{2^n} (1- \beta)^n}{ \frac{1}{2^{n-1}} (1- \beta)^{n-1}} =  \frac{1}{2}  (1- \beta),$$
and therefore,
$$ \mbox{dim}_{\mathcal H} (C^{\beta}) = \frac{\log k}{-\log a} = \frac{\log 2}{-\log (1/2(1-\beta))} =\frac{\log 2}{\log 2-\log (1-\beta)}.$$ 

\item For the generalized middle$-\{\beta_{i}\}$  Cantor sets, since $k_n =2,$ $c_0=1$ and 
$$c_n = \frac{1}{2^n} \prod_{j=1}^n(1- \beta_j),$$ 
for all  $n \geq 1,$ then as
$$a= \lim_{n\rightarrow \infty} \frac{\frac{1}{2^n} \prod_{j=1}^n(1- \beta_j)}{ \frac{1}{2^{n-1}} \prod_{j=1}^{n-1}(1- \beta_j)} =  \frac{1}{2}  \lim_{n\rightarrow \infty} (1- \beta_n),$$
and therefore,
$$ \mbox{dim}_{\mathcal H} (C^{\{\beta_{i}\}}) = \frac{\log k}{-\log a} = \frac{\log 2}{-\log (1/2 \lim_{n\rightarrow \infty} (1- \beta_n))} =\frac{\log 2}{\log 2-\log ( \lim_{n\rightarrow \infty} (1- \beta_n))}.$$ 

\item For the generalized  $SVC(\alpha, \beta)$ sets, since $k_n =2,$ $c_0=1$ and 
$$c_n = \frac{1}{2^n} (1-\beta) -\frac{\alpha \beta}{2^{n-1}}\left[\frac{1-(2\alpha)^{n-1}}{1-2\alpha}\right] ,$$
 for all   $n \geq 1,$ then as
\begin{eqnarray*}
a&=& \lim_{n\rightarrow \infty} \frac{\frac{1}{2^n} (1-\beta) -\frac{\alpha \beta}{2^{n-1}}\left[\frac{1-(2\alpha)^{n-1}}{1-2\alpha}\right] }{\frac{1}{2^{n-1}} (1-\beta) -\frac{\alpha \beta}{2^{n-2}}\left[\frac{1-(2\alpha)^{n-2}}{1-2\alpha}\right] } =   \lim_{n\rightarrow \infty} \frac{ (1-\beta) -2\alpha \beta \left[\frac{1-(2\alpha)^{n-1}}{1-2\alpha}\right] }{2 (1-\beta) -4\alpha \beta\left[\frac{1-(2\alpha)^{n-2}}{1-2\alpha}\right] } \\
&=& \frac{ (1-\beta) - \left[\frac{2\alpha \beta}{1-2\alpha}\right] }{2 (1-\beta) -\left[\frac{4\alpha \beta}{1-2\alpha}\right] } =\frac{1}{2}
\end{eqnarray*}
and therefore,
$$ \mbox{dim}_{\mathcal H} (C^{(\alpha, \beta)}) = \frac{\log k}{-\log a} = \frac{\log 2}{-\log 1/2}= \frac{\log 2}{\log 2}=1. $$ 

\end{itemize}

Observe, from the examples above, that when the Cantor-like sets have positive measure their Hausdorff dimensions are one.

\section{Variations and generalizations of the Cantor function.}

Observe that for each of the Cantor-like sets considered in the previous section we can construct iteratively  a {\em Cantor-like function}, such that for the $n^\text{th}$-step $f_n$ is a continuous, monotone, non-decreasing function, $f_n(0)=0, f_n(1) =1$ and such that $f_n$ is constant in the in the $2^{n-1}$ open subintervals removed in that step. The uniform limit of $\{f_n\}$ will be the corresponding Cantor-like function $f$. It is a monotone, non-decreasing, continuous function on $[0,1]$, $f(0)=0, f(1) =1$ and if the corresponding Cantor set has measure zero, then $f$ is singular. 

In the following figures we give illustration of some iterations of the construction of the Cantor function corresponding to the $5$-adic-type Cantor-like set,
\begin{center}
\includegraphics[width=3in]{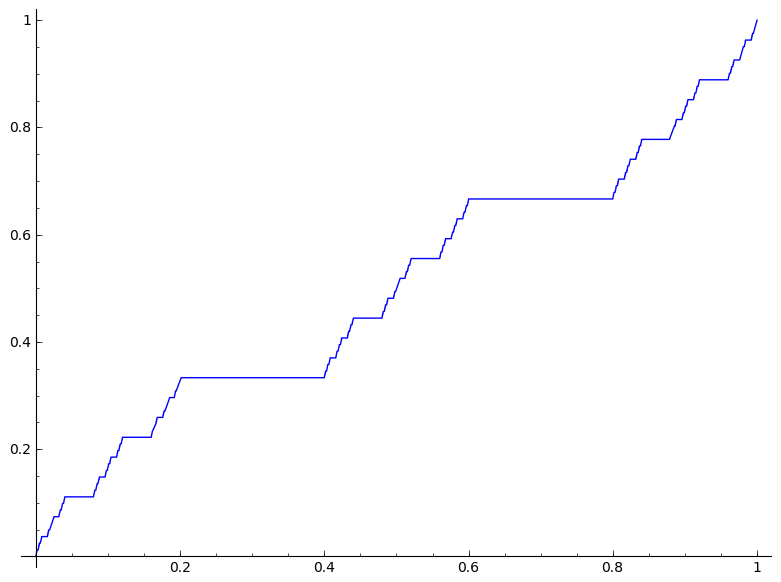}
\end{center}
and of the middle $0.5$-Cantor sets,
\begin{center}
\includegraphics[width=3in]{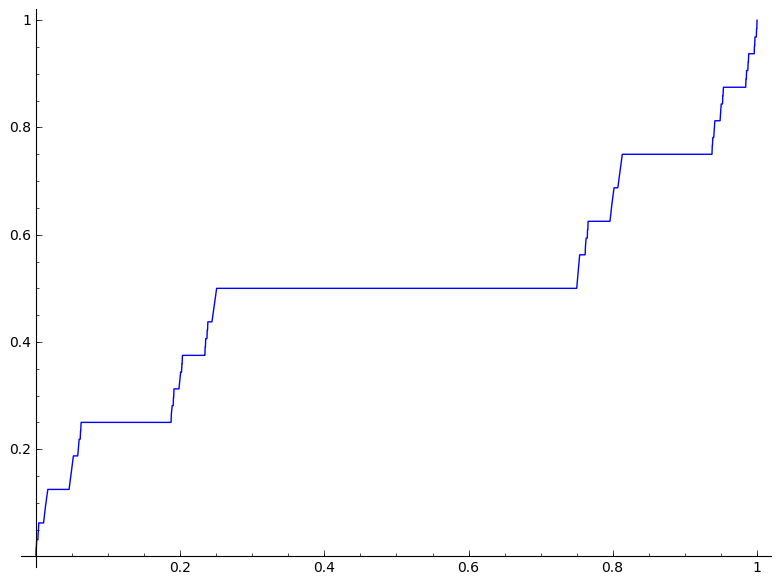}
\end{center}

On the other hand, an analogous construction of the Cantor-Lebesgue function using the ternary expansion can not be done in general. Nevertheless for the case of Cantor-like sets determined by two sequences $\{k_n\}$, $\{c_n\}$ that analogous construction is actually possible, as it is done in \cite{GanVijaVen}. 

Let $E^{k,c},$ be the {\em Cantor-like} set determined by the sequences $\{k_n\}$ and $\{c_n\}$,   a {\em Cantor-Lebesgue function} $G^{k,c} $ can be constructed on $[0,c_0].$ Let $h_n = (k_1\cdot k_2 \cdots k_n)^{-1}$, then the function $G^{k,c}$ is defined as follows: For $\sum_{j=1}^\infty \varepsilon_j r_j \in E$, let
$$G^{k,c}(\sum_{j=1}^\infty \varepsilon_j r_j) = \sum_{j=1}^\infty \varepsilon_j h_j.$$
For each $n \in {\mathbb N}$ , $t = \sum_{j=1}^n \varepsilon_j r_j \in F_n,$ we have
\begin{eqnarray*}
G^{k,c}(t+ c_{n+1}) &=& G^{k,c}(t+ \sum_{j=n+2}^\infty (k_j-1) r_j )= \sum_{j=1}^n \varepsilon_j h_j+ \sum_{j=n+2}^\infty (k_j-1) r_j \\
&=&\sum_{j=1}^n \varepsilon_j h_j +h_{n+1} = G^{k,c}(t+ r_{n+1}).
\end{eqnarray*}

Observe that this corresponds in this general case 
with the property of the Cantor function already mentioned
$$G(t +\frac{1}{3^{n}}) = G(t + \frac{2}{3^{n}})$$

Define $G^{k,c}$ on $(t+c_{n+1}, t+ r_{n+1})$ to have the constant value that it has at both of the endpoints, namely $G^{k,c} (t)+ h_{n+1}.$ Thus we have defined the function $G^{k,c}$ on $[0, c_0]$. 

$G^{k,c}$ is monotone nondecreasing, it is constant in each component subinterval of $[0,c_0] - E^{k,c},$ and it is easy to prove that it is also continuous.\\

\end{document}